\newlist{steps}{enumerate}{1}
\setlist[steps, 1]{label = Step \arabic*:}
\DeclareRobustCommand\widecheck[1]{{\mathpalette\@widecheck{#1}}}
\def\@widecheck#1#2{%
   \setbox\z@\hbox{\m@th$#1#2$}%
   \setbox\tw@\hbox{\m@th$#1%
      \widehat{%
         \vrule\@width\z@\@height\ht\z@
         \vrule\@height\z@\@width\wd\z@}$}%
   \dp\tw@-\ht\z@
   \@tempdima\ht\z@ \advance\@tempdima2\ht\tw@ \divide\@tempdima\thr@@
   \setbox\tw@\hbox{%
      \raise\@tempdima\hbox{\scalebox{1}[-1]{\lower\@tempdima\box\tw@}}}%
   {\ooalign{\box\tw@ \cr \box\z@}}}
\theoremstyle{plain}
\newtheorem{thm}{Theorem}[section]
\crefname{thm}{Theorem}{Theorems}
\Crefname{thm}{Theorem}{Theorems}
\newtheorem{prop}[thm]{Proposition}
\crefname{prop}{Proposition}{Propositions}
\Crefname{prop}{Proposition}{Propositions}
\newtheorem{lem}[thm]{Lemma}
\crefname{lem}{Lemma}{Lemmas}
\Crefname{lem}{Lemma}{Lemmas}
\newtheorem{cor}[thm]{Corollary}
\crefname{cor}{Corollary}{Corollaries}
\Crefname{cor}{Corollary}{Corollaries}
\crefname{claim}{Claim}{Claims}
\Crefname{claim}{Claim}{Claims}
\crefname{property}{Property}{Properties}
\Crefname{property}{Property}{Properties}
\crefname{problem}{Problem}{Problems}
\Crefname{problem}{Problem}{Problems}
\newtheorem{ques}[thm]{Question}
\crefname{ques}{Question}{Questions}
\Crefname{ques}{Question}{Questions}
\theoremstyle{definition}
\newtheorem{defn}[thm]{Definition}
\crefname{defn}{Definition}{Definitions}
\Crefname{defn}{Definition}{Definitions}
\crefname{notation}{Notation}{Notations}
\Crefname{notation}{Notation}{Notations}
\crefname{convention}{Convention}{Conventions}
\Crefname{convention}{Convention}{Conventions}
\crefname{cond}{Condition}{Conditions}
\Crefname{cond}{Condition}{Conditions}
\crefname{assum}{Assumption}{Assumptions}
\Crefname{assum}{Assumption}{Assumptions}
\crefname{conj}{Conjecture}{Conjectures}
\Crefname{conj}{Conjecture}{Conjectures}
\crefname{ques}{Question}{Questions}
\Crefname{ques}{Question}{Questions}
\theoremstyle{remark}
\newtheorem{rem}[thm]{Remark}
\crefname{rem}{Remark}{Remarks}
\Crefname{rem}{Remark}{Remarks}
\crefname{ex}{Example}{Examples}
\Crefname{ex}{Example}{Examples}
\crefname{section}{Section}{Sections}
\Crefname{section}{Section}{Sections}
\crefname{subsection}{Subsection}{Subsections}
\Crefname{subsection}{Subsection}{Subsections}
\crefname{figure}{Figure}{Figures}
\Crefname{figure}{Figure}{Figures}
\newtheorem*{acknowledgement}{Acknowledgement}
\newcommand{\Z}{\mathbb{Z}}
\newcommand{\Q}{\mathbb{Q}}
\newcommand{\C}{\mathbb{C}}
\newcommand{\ctext}[1]{\raise0.2ex\hbox{\textcircled{\scriptsize{#1}}}}
\def\det{\operatorname{det}}
\def\dim{\operatorname{dim}}
\newcommand{\mbar}[1]{{\ooalign{\hfil#1\hfil\crcr\raise.167ex\hbox{--}}}}
\def\wt{\widetilde}
\title{On the slice-torus invariant $q_M$ from $\Z_2$-equivariant Seiberg--Witten Floer cohomology}
\author{Nobuo Iida}
\address{Tokyo Institute of Technology, Ookayama, Meguro-ku, Tokyo}
\email{iida.n.ad@m.titech.ac.jp}
\author{Taketo Sano}
\address{RIKEN iTHEMS, Wako, Saitama 351-0198, Japan}
\email{taketo.sano@riken.jp}
\author{Kouki Sato}
\address{Meijo University, Tempaku, Nagoya 468-8502, Japan}
\email{satokou@meijo-u.ac.jp}
\author{Masaki Taniguchi} 
\address{Department of Mathematics, Graduate School of Science, Kyoto University, Kitashirakawa Oiwake-cho, Sakyo-ku, Kyoto 606-8502, Japan}
\email{taniguchi.masaki.7m@kyoto-u.ac.jp}
\begin{document}

\begin{abstract}
We show that Iida--Taniguchi's $\Z$-valued slice-torus invariant \( q_M \) cannot be realized as a linear combination of Rasmussen's \( s \)-invariant,  Ozsv\'ath--Szab\'o's \( \tau \)-invariant, all of the $\mathfrak{sl}_N$-concordance invariants ($N \geq 2$), Baldwin--Sivek's instanton $\tau$-invariant,  Daemi--Imori--Sato--Scaduto--Taniguchi's instanton $\tilde{s}$-invariant and Sano--Sato's Rasmussen type invariants $\tilde{ss}_c$. 
\end{abstract}

\maketitle

\section{Introduction}
Link homology theories have become central to modern knot theory, particularly from a four-dimensional perspective. These include prominent examples such as Khovanov homology, Heegaard Floer homology, monopole Floer homology, and instanton Floer homology. Various link homology theories yield numerous concordance invariants, among which one of the simplest and most extensively studied classes is the family of \textit{slice-torus invariants}. A slice-torus invariant (\cite{Li04, Le14}) is a real-valued function \( f \) defined on the smooth knot concordance group that satisfies the following properties: for knots $K,K'$ in $S^3$,
\begin{itemize}
    \item[(i)] \( |f(K)| \leq g_4(K) \),
    \item[(ii)] \( f(K \# K') = f(K) + f(K') \), and
    \item[(iii)] \( f(T(p,q)) = \frac{1}{2} (p-1)(q-1) \),
\end{itemize}
where \( g_4(K) \) denotes the smooth slice genus and \( T(p,q) \) is the \((p,q)\)-positive torus knot for coprime integers \( p,q \geq 2 \).

Examples of slice-torus invariants arise from several theories, including Heegaard Floer theory \cite{OS03}, Khovanov homology theory \cite{Ra10, Lobb09, Wu09,  LS14, SS22, Lo12, Le14, LL:2016}, instanton Floer theory \cite{GLW19, BS21, DISST22}\footnote{ Ghosh--Li--Wong \cite{GLW19} proved Baldwin--Sivek's instanton tau invariant $\tau^\#$ coicides with the concordance invariant $\tau_I \in \Z$ which comes from the Alexander decomposition of a variant of sutured instanton homology $\underline{KHI}^-$. 
This ensures $\tau^\#$ is actually integer-valued.   }, and Seiberg--Witten Floer theory \cite{IT24}. Once a slice-torus invariant is obtained, it immediately provides a solution to the Milnor conjecture on the slice genus of torus knots and reproves the existence of exotic \(\mathbb{R}^4\) by showing the existence of knots that are topologically slice but not smoothly slice. Moreover, for a large class of knots, including quasipositive and alternating knots, the values of all slice-torus invariants coincide. This observation underlies the conjecture that Ozsv\'ath--Szab\'o's \(\tau\)-invariant and Rasmussen's \(s\)-invariant are equal.

Notably, in \cite{HO08}, Hedden and Ording demonstrated that Rasmussen's \(s\)-invariant and Ozsv\'ath--Szab\'o's \(\tau\)-invariant are not identical. Similarly, Lewark \cite{Le14} proved the linear independence of \(\tau\), \(s\), and Rasmussen-type invariants \(s_N\) derived from $\mathfrak{sl}_N$-Khovanov--Rozansky homology theory. See also \cite{MPP07, LS14, LC24, Sc23} for linear independence of Rasmussen invariants with different coefficients. 
There has also been significant progress in studying the general behavior of slice-torus invariants \cite{Li04, Le14, FLL22, FLL24}.

In \cite{IT24}, the first and fourth authors introduced a \(\mathbb{Z}\)-valued slice-torus invariant \( q_M(K) \) arising from \(\mathbb{Z}_2\)-equivariant Seiberg--Witten theory applied to double branched covering spaces of knots, conjecturally equal to the Heegaard Floer $q_\tau$-invariant introduced by Hendricks--Lipshitz--Sarkar \cite{HLS16} with a signature correction term. 
A natural question is whether \( q_M \) coincides with other known slice-torus invariants. 
In this paper, we address this question by proving the following:

\begin{thm}\label{main}
Let us denote by $s$, $\tau$, $s_{\partial \omega, \alpha}$, $\tau^\#$, $\tilde{s}$, and $\tilde{ss}_c$ the Rasmussen invariant \cite{Ra10}, the Ozsv\'ath--Szab\'o $\tau$-invariant \cite{OS03}, the $\mathfrak{sl}_N$-concordance invariant ($N \geq 2$) \cite{Lobb09, Wu09, LL:2016} with any separable potential $\partial \omega$ equipped with any root $\alpha$, the instanton \( \tau \)-invariant  \cite{BS21}, the instanton $\tilde{s}$-invariant  \cite{DISST22}, and Sano--Sato's Rasmussen type invariant \cite{SS22} for any PID $R$ with a prime element $c$ respectively. 
Then we have
\[
q_M(9_{42})=-1 \text{ while } s(9_{42}) = \tau (9_{42}) =s_{\partial \omega, \alpha}( 9_{42})= \tau^\# (9_{42}) = \tilde{s} (9_{42}) =\tilde{ss}_c(9_{42}) =  0,
\]
where the convention of $9_{42}$ follows the knotinfo \cite{knotinfo}. 
In particular, the invariant $q_M$ cannot be realized as a linear combination of $s$, $\tau$, $s_{\partial \omega, \alpha}$, $\tau^\#$, $\tilde{s}$, and $\tilde{ss}_c$. 

\end{thm}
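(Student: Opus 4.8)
The plan is to split the statement into the trivial linear-algebra deduction and the actual content, namely the seven evaluations on $9_{42}$, and to treat the six vanishing invariants uniformly, isolating $q_M$ as the one genuinely new computation.

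First I would record the easy implication. If one had an identity $q_M = \sum_i c_i f_i$ of functions on the smooth concordance group, with $c_i \in \mathbb{R}$ and the $f_i$ ranging over $s, \tau, s_{\partial\omega,\alpha}, \tau^\#, \tilde{s}, \tilde{ss}_c$, then evaluating at $[9_{42}]$ and inserting $f_i(9_{42}) = 0$ would give $q_M(9_{42}) = 0$, contradicting $q_M(9_{42}) = -1$. Thus the ``In particular'' clause is immediate once the numerical claims are established, and all the work is in those.

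Next, the six vanishings, which I would obtain in one stroke from the single structural feature that distinguishes these invariants from $q_M$: a slice--Bennequin inequality. Each of (a suitably normalized) $s$, $\tau$, $s_{\partial\omega,\alpha}$, $\tau^\#$, $\tilde{s}$, $\tilde{ss}_c$ satisfies, for every diagram $D$ of a knot $K$, a bound of the form $f(K) \ge \tfrac{1}{2}(\mathrm{sl}(D)+1)$ with $\mathrm{sl}(D) = w(D) - O(D)$ the self-linking number; these are proved in the respective sources \cite{Ra10, OS03, Lobb09, Wu09, BS21, DISST22, SS22}. The point I would then exploit is that $9_{42}$ is special in that both it and its mirror realize the unknot's maximal self-linking number: exhibiting explicit diagrams of $9_{42}$ and of $\overline{9_{42}}$ with $w - O = -1$ shows $\overline{\mathrm{sl}}(9_{42}) = \overline{\mathrm{sl}}(\overline{9_{42}}) = -1$, the bound $\overline{\mathrm{sl}} \le 2\tau - 1 = -1$ coming from $\tau(9_{42}) = \tau(\overline{9_{42}}) = 0$ being attained. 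Feeding the $9_{42}$ diagram into the inequality yields $f(9_{42}) \ge 0$, while feeding in the mirror diagram and using the slice--torus relation $f(\overline{K}) = -f(K)$ yields $f(9_{42}) \le 0$; hence $f(9_{42}) = 0$ simultaneously for all six. This handles the infinite families (all separable potentials and roots, all PIDs $R$ with prime $c$) uniformly, and for $s$ and $\tau$ one may cross-check against the tabulated values in \cite{knotinfo}. The verification I would need to supply here is the self-linking computation from an explicit diagram.

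Finally, the computation $q_M(9_{42}) = -1$, which I expect to be the main obstacle, and which is exactly where $q_M$ parts company with the other six: being built from the $\mathbb{Z}_2$-equivariant \SW Floer cohomology of the double branched cover $\Sigma_2(9_{42})$ rather than from a diagram of $9_{42}$, it is under no obligation to obey slice--Bennequin, and indeed the value $-1$ against $\overline{\mathrm{sl}}(9_{42}) = -1$ shows it violates that inequality. Concretely I would first identify $\Sigma_2(9_{42})$, a rational homology sphere with $|H_1| = \det(9_{42}) = 7$, via an explicit surgery description, and then evaluate the equivariant Floer-theoretic quantity defining $q_M$ using the machinery of \cite{IT24}, cross-checking where possible against the conjectural identification with the Heegaard Floer $q_\tau$-invariant of \cite{HLS16} together with its signature correction. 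The genuine difficulty is that this is an honest gauge-theoretic computation admitting no diagrammatic shortcut; I would aim to reduce it to the equivariant Floer package developed earlier in the paper applied to the surgery presentation of $\Sigma_2(9_{42})$. Once $q_M(9_{42}) = -1$ is in hand, combining it with the six vanishings completes the proof.
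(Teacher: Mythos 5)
Your reduction to the seven evaluations plus the trivial linear-algebra step matches the paper, but your uniform slice--Bennequin argument for the six vanishings has a fatal gap: the self-linking input is false. You need transverse representatives (equivalently, diagrams with $w(D)-O(D)=-1$) of \emph{both} $9_{42}$ and its mirror realizing self-linking number $-1$. No such representative of $9_{42}$ exists: the Bennequin-type inequality for $q_M$ recorded in \cref{qmtehta} (property (4): $sl(\mathcal{T})\leq 2q_M(\mathcal{T})-1$), combined with $q_M(9_{42})=-1$, forces $\overline{sl}(9_{42})\leq -3$. Indeed, you can see the inconsistency without knowing the value of $q_M$ in advance: the slice--Bennequin inequality you invoke is a formal consequence of the slice-torus axioms (Livingston \cite{Li04}, Lewark \cite{Le14}), hence it holds for $q_M$ as well, so if your two self-linking claims were correct, your sandwich argument would prove $q_M(9_{42})=0$, contradicting the very statement being proved. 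More generally, no argument using only the slice-torus axioms plus diagrammatic data common to all slice-torus invariants can separate the six vanishing invariants from $q_M$; this is exactly why the paper treats each family by invariant-specific means: Kirby moves together with the $d$- and $h$-invariants of $\pm1$-surgeries for $\tau$, $\tau^\#$, $\tilde{s}$ (\cref{vanishing}), a machine computation of reduced Bar--Natan homology over $\Z[H]$ for $\widetilde{ss}_h$ and hence $s$ (\cref{ss}), and reduced HOMFLY--PT homology with the Chandler--Gorsky spectral sequence argument for $s_{\partial w,\alpha}$ (\cref{sln}).

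For $q_M(9_{42})=-1$ you correctly isolate the essential new computation, but you give no argument: ``evaluate the equivariant Floer-theoretic quantity defining $q_M$ using the machinery of \cite{IT24}'' restates the goal, and the conjectural identification with the Heegaard Floer $q_\tau$-invariant of \cite{HLS16} cannot be invoked in a proof. The paper in fact performs no direct gauge-theoretic computation at all; the key idea you are missing is that $\Sigma_2(9_{42})$ is an L-space over $\mathbb{F}_2$ (Greene \cite{Gr13}, reproved in \cref{oo} by resolving $9_{42}$ into $8_{19}=T_{3,4}$ and a two-fold quasi-alternating link and applying the surgery exact triangle for branched double covers), combined with \cite[Theorem 1.10]{IT24}, which states $q_M(K)=-\sigma(K)/2$ whenever $\Sigma_2(K)$ is an L-space. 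Since $\sigma(9_{42})=2$ in the paper's convention, this immediately yields $q_M(9_{42})=-1$.
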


\begin{rem}
Note that Baraglia showed that the concordance invariant \(\theta \in \Z_{\geq 0} \) from \(S^1 \times \mathbb{Z}_2\)-equivariant Seiberg--Witten theory \cite{Ba22, BH} satisfies
\[
\theta(9_{42}) = 0 \quad \text{and} \quad \theta(-9_{42}) = 1.
\]
In general, for any knot \( K \subset S^3 \),
\[
q_M(K) \leq \theta(K) \leq g_4(K)
\]
holds. 

The first inequality was proved by the first and fourth authors in \cite[Theorem 1.11]{IT24}, while the second inequality was proved by Baraglia in \cite[Theorem 1.4]{Ba22}. It is known that \(g_4(9_{42}) = 1\), so in summary, \(q_M\) and \(\theta\) provide the optimal 4-ball genus bound for \(9_{42}\), whereas the others in \cref{main} do not.

\end{rem}

As an immediate corollary, we have: 
\begin{cor}\label{aa} Let $n$ be a non-zero integer. 
  The $n$-fold connected sum $\#_n 9_{42}$ is not a squeezed knot. 
\end{cor}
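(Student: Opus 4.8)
The plan is to deduce \cref{aa} from \cref{main} using only the additivity of slice-torus invariants together with the characteristic feature of squeezed knots. The essential input from Feller--Lewark--Lobb \cite{FLL22} is that any two slice-torus invariants take equal values on a squeezed knot; contrapositively, if two slice-torus invariants disagree on a knot $K$, then $K$ is not squeezed. Since $q_M$ is a slice-torus invariant by \cite{IT24} and the Ozsv\'ath--Szab\'o invariant $\tau$ is a slice-torus invariant by \cite{OS03}, it suffices to produce, for each nonzero integer $n$, a discrepancy between $q_M(\#_n 9_{42})$ and $\tau(\#_n 9_{42})$.

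First I would record the single-copy values supplied by \cref{main}, namely $q_M(9_{42}) = -1$ and $\tau(9_{42}) = 0$. Next I would use the fact that any slice-torus invariant $f$, being additive under connected sum (property (ii)) and vanishing on slice knots, descends to a group homomorphism from the smooth knot concordance group to $\R$; consequently $f(\#_n 9_{42}) = n\, f(9_{42})$ for every integer $n$ (with the usual convention $\#_{-m}9_{42} = \#_m \overline{9_{42}}$ for $m>0$, where $\overline{9_{42}}$ is the concordance inverse). Applying this to $f = q_M$ and $f = \tau$ gives
\[
q_M(\#_n 9_{42}) = -n \qquad\text{and}\qquad \tau(\#_n 9_{42}) = 0 .
\]
Hence for every nonzero $n$ we have $q_M(\#_n 9_{42}) = -n \neq 0 = \tau(\#_n 9_{42})$.

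Finally I would conclude: since $q_M$ and $\tau$ are slice-torus invariants that disagree on $\#_n 9_{42}$, this knot fails the squeezed-knot property, so $\#_n 9_{42}$ is not squeezed. This would complete the proof.

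Given \cref{main}, the argument is short and presents no serious analytic or topological obstacle; the only points demanding care are citing the precise Feller--Lewark--Lobb statement (that all slice-torus invariants coincide on squeezed knots, rather than merely agree with the slice genus) and the sign/mirror bookkeeping needed to extend the connected-sum formula to negative $n$.
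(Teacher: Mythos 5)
Your proof is correct and is essentially the paper's argument: the paper also deduces the corollary from the Feller--Lewark--Lobb fact that all slice-torus invariants agree on squeezed knots, combined (implicitly) with additivity to get $q_M(\#_n 9_{42}) = -n \neq 0 = \tau(\#_n 9_{42})$. Your write-up simply makes explicit the additivity and mirror-image bookkeeping that the paper leaves to the reader.
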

\begin{proof} This follows from the fact that every slice torus invariant takes the same value \cite{FLL22} for squeezed knots.
\end{proof}
\begin{rem}
Note that a refinement of the Rasmussen invariant $s_+^{\operatorname{Sq}_2}$ introduced by Lipshitz--Sarkar \cite{LS14}, which uses Khovanov homotopy type with Steenrod operator, satisfies $s_+^{\operatorname{Sq}_2}(9_{42}) /2=1$. This fact has been used to prove $9_{42} $ is not squeezed. \cref{aa} might have alternative proof by showing $s_+^{\operatorname{Sq}_2}(\#_n 9_{42}) \neq 0$. 
\end{rem}

{\bf Structure of the paper:}
In \cref{proof of main}, we give a proof of \cref{main}. In \cref{qmtehta}, we briefly discuss about the backgrounds of the invariants $q_M$ and $\theta$ and summarize basic properties of them. Also, we shall discuss the values of these invariants for prime knots up to $10$ crossings. We put the tables of these values in \cref{Table}. 

\begin{acknowledgement}
We would like to thank Joshua Greene for answering to our question regarding \cite{Gr13}.  The first author acknowledges support from JSPS KAKENHI Grant Number 22J00407. The second author acknowledges support from JSPS KAKENHI Grant Numbers 23K12982, RIKEN iTHEMS Program and academist crowdfunding. 
The fourth author acknowledges partial support from JSPS KAKENHI Grant Number 22K13921.
\end{acknowledgement}

\section{Proof of \cref{main}}\label{proof of main}

\begin{figure}[t]
\begin{center}
\includegraphics[scale=0.7]{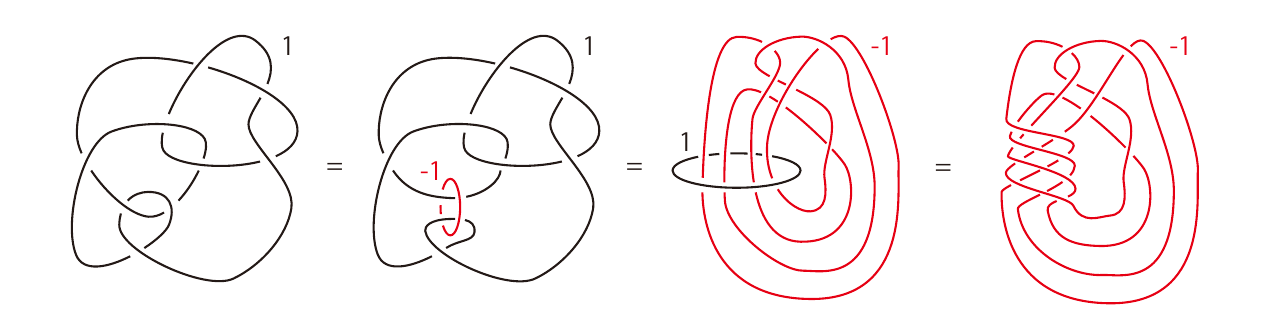}
\caption{Kirby moves which show $S^3_1(9_{42}) \cong S^3_{-1}(K)$ for some knot $K$ in $S^3$}\label{negative_definite}
\vspace{1em}
\includegraphics[scale=0.7]{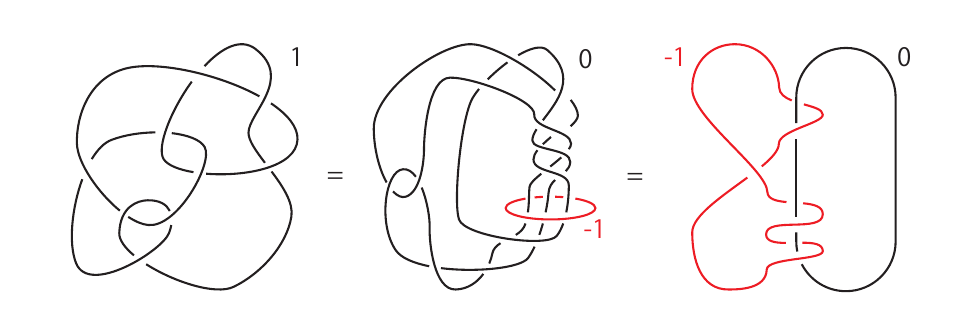}
\caption{Kirby moves which show that $S^3_1(-9_{42})$ bounds a smooth compact contractible 4-manifold}\label{contractible}
\end{center}
\end{figure}

We first prove 
that the invariants $\tau$, $\tau^{\sharp}$ and $\tilde{s}$ vanish
for $9_{42}$: 
\begin{lem}\label{vanishing}
We have 
\[
\tau (9_{42}) = \tau^\# (9_{42}) = \tilde{s} (9_{42}) = 0. 
\]
\end{lem}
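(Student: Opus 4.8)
The plan is to establish each of the three vanishing statements $\tau(9_{42}) = 0$, $\tau^\#(9_{42}) = 0$, and $\tilde{s}(9_{42}) = 0$ by exploiting the special topological features of $9_{42}$, which are already hinted at by the two figures in the paper. The knot $9_{42}$ is a well-known example with a number of distinguishing properties: it has trivial Alexander polynomial-like behavior at the relevant level, its signature vanishes, and—crucially for us—the surgery diagrams in \cref{negative_definite} and \cref{contractible} exhibit that $S^3_1(9_{42}) \cong S^3_{-1}(K)$ and that $S^3_1(-9_{42})$ bounds a contractible $4$-manifold. I would structure the proof around these surgery-theoretic facts, since all three invariants are known to be controlled by the $d$-invariants (or their Floer-theoretic analogues) of $\pm 1$-surgeries.

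**First I would treat $\tau$.** The cleanest route is to recall that each of these slice-torus invariants gives a concordance-genus bound, so $|\tau(9_{42})| \leq g_4(9_{42}) = 1$, forcing $\tau(9_{42}) \in \{-1, 0, 1\}$. To pin it to $0$, I would use the known value of the Heegaard Floer $\tau$-invariant of $9_{42}$, which can be read off from its (small) knot Floer homology; $9_{42}$ is a thin-adjacent knot whose $\tau$ is documented to vanish (consistent with $\sigma(9_{42}) = 0$ and the fact that $9_{42}$ is algebraically slice-like in the relevant sense). Alternatively, and more in the spirit of the figures, the relation $S^3_{+1}(9_{42}) \cong S^3_{-1}(K)$ constrains the sign of $\tau$ via the surgery formula for $d$-invariants, and combined with the genus bound this forces $\tau(9_{42}) = 0$.

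**Next I would handle $\tau^\#$ and $\tilde{s}$ together**, since both are instanton-theoretic slice-torus invariants and both obey the genus bound $|\tau^\#(9_{42})|, |\tilde{s}(9_{42})| \leq g_4(9_{42}) = 1$. For $\tau^\#$ I would invoke the Ghosh--Li--Wong identification (cited in the paper's footnote) that $\tau^\#$ equals an integer-valued invariant from sutured instanton homology, and then argue that the relevant instanton homology of $9_{42}$ is as simple as that of the unknot in the graded piece detecting $\tau^\#$—equivalently that the instanton concordance invariant cannot detect the nontrivial $\mathbb{Z}_2$-equivariant phenomenon that $q_M$ sees. For $\tilde{s}$, the Daemi--Imori--Sato--Scaduto--Taniguchi invariant, I would use the same genus bound together with whatever computational input (e.g., vanishing for knots bounding appropriate contractible fillings, as suggested by \cref{contractible}) rules out $\pm 1$.

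**The hard part will be** the instanton invariants $\tau^\#$ and $\tilde{s}$: unlike $\tau$, they do not admit a purely combinatorial computation, so the genus bound alone only confines them to $\{-1,0,1\}$ and I must supply an independent argument eliminating $\pm 1$. The most robust approach is to relate their vanishing to a concrete topological property of $9_{42}$—for instance, that $9_{42}$ (or its mirror) admits a movie or band-move presentation witnessing that the relevant instanton generator sits in the trivial grading, or to leverage the contractible filling of $S^3_1(-9_{42})$ from \cref{contractible} to show the corresponding instanton quantity degenerates. I would expect to cite the defining genus bounds from \cite{BS21} and \cite{DISST22} and then reduce the remaining ambiguity using the surgery descriptions in the two figures, which is precisely why those figures are stated immediately before the lemma.
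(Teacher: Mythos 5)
Your overall framework---that all three invariants should be controlled by $d$-invariants or their instanton analogues of $\pm 1$-surgeries, read off from the two figures---is exactly the paper's strategy, but your execution has a genuine gap precisely where you flag ``the hard part.'' The paper's proof rests on three specific published implications that you never state: $\tau(K)>0 \Rightarrow d(S^3_1(K))<0$ \cite{HW16}, $\tilde{s}(K)>0 \Rightarrow h(S^3_1(K))<0$ \cite[Theorem 1.5]{DISST22}, and $\tau^\#(K)>0 \Rightarrow h(S^3_1(K))<0$ \cite[Proposition 9.2]{BS22II}, where $h$ is Fr\o yshov's instanton invariant. With these in hand the argument is uniform and requires no instanton computation whatsoever: \cref{negative_definite} shows $S^3_1(9_{42})$ bounds both a positive definite and a negative definite $4$-manifold, hence $d(S^3_1(9_{42}))=h(S^3_1(9_{42}))=0$; \cref{contractible} shows $S^3_1(-9_{42})$ bounds a contractible $4$-manifold, hence $d(S^3_1(-9_{42}))=h(S^3_1(-9_{42}))=0$; applying each implication to both $9_{42}$ and its mirror $-9_{42}$ (using $f(-K)=-f(K)$ for slice-torus invariants) yields $f(9_{42})\le 0$ and $f(9_{42})\ge 0$ simultaneously, for $f$ any of the three invariants. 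Your substitutes for the two instanton implications---``argue that the relevant instanton homology of $9_{42}$ is as simple as that of the unknot'' and ``whatever computational input \dots rules out $\pm 1$''---are not arguments, and without the cited implications the genus bound alone cannot eliminate $\pm 1$.

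A second, smaller error: the genus bound plays no role in the paper's proof and cannot play the role you assign it. For $\tau$ you claim the surgery constraint ``combined with the genus bound'' forces $\tau(9_{42})=0$; in fact the constraint from \cref{negative_definite} only gives $\tau(9_{42})\le 0$, and the genus bound then still leaves $\tau(9_{42}) \in \{-1,0\}$. What closes the gap is the mirror: the contractible filling of $S^3_1(-9_{42})$ gives $\tau(-9_{42})\le 0$, i.e.\ $\tau(9_{42})\ge 0$. (Your fallback of citing the documented value of $\tau(9_{42})$ from knot Floer homology computations would be a legitimate alternative for $\tau$ alone, but it does not extend to $\tau^\#$ or $\tilde{s}$, which is exactly where your proposal lacks a workable mechanism.)
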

\begin{proof}[Proof of \cref{vanishing}] 
 We shall use the following statements: 
\begin{itemize}
\item For any knot $K$, if $\tau  (K) >0$, then we have $d(S^3_{1}(K)) <0$.
    \item For any knot $K$, if $\tilde{s} (K) >0$, then we have $h(S^3_{1}(K)) <0$.
    \item For any knot $K$, if $\tau^\# (K) >0$, then we have $h(S^3_{1}(K)) <0$.
\end{itemize}
Here $d$ denotes the Heegaard Floer correction term and $h$ denotes the instanton Fr\o yshov invariant of oriented homology $3$-spheres introduced in \cite{Fr02} and the convention follows $h(\Sigma(2,3,5))=1$. 

The first claim follows from \cite{HW16}, see also \cite[Section 2.2]{Sa18}. 
The second fact is proven in \cite[Theorem 1.5]{DISST22} and the third fact is proven in \cite[Proposition 9.2]{BS22II}. Therefore, to show $\tau (9_{42}) = \tau^\# (9_{42}) = \tilde{s} (9_{42}) = 0$, it is sufficient to see 
\[
h(S^3_1(9_{42})) = h(S^3_1(-9_{42}))= d(S^3_1(9_{42})) = d(S^3_1(-9_{42})) = 0. 
\]
The Kirby moves described in \Cref{negative_definite} shows that 
\[
S^3_1(9_{42}) \cong S^3_{-1}(K)
\]
for some knot $K$ in $S^3$. So, $S^3_1(9_{42})$ bounds both of positive definite and negative definite 4-manifolds, hence we have $h(S^3_1(9_{42})) = d(S^3_1(9_{42})) =0$. The Kirby moves described in \Cref{contractible} shows that $S^3_1(-9_{42})$ bounds a smooth compact contractible $4$-manifold. So, we see the latter statements. 
This completes the proof. 
\end{proof}

Next, we determine Sano--Sato's concordance invariants for $9_{42}$. The link invariant $\widetilde{ss}_h$ of \cite{SS22} is defined for each non-zero non-invertible element $h$ in an integral domain $R$, and is slice-torus when $R$ is a PID and $h$ is prime. \cite[Theorem 2]{SS21} states that for the special case $(R, h) = (\mathbb{F}[H], H)$ where $\mathbb{F}$ is any field, the invariant coincides with the Rasmussen invariant $s^\mathbb{F}$ over $\mathbb{F}$. 

\begin{table}[t]
\centering
\begin{adjustbox}{max width=\textwidth}
\begin{tabular}{r|llllllllll}
$6$ & $.$ & $.$ & $.$ & $.$ & $.$ & $.$ & $\mathbb{Z}[H]$ \\
$4$ & $.$ & $.$ & $.$ & $.$ & $.$ & $\mathbb{Z}[H]$ & $.$ \\
$2$ & $.$ & $.$ & $.$ & $.$ & $\mathbb{Z}[H]$ & $.$ & $.$ \\
$0$ & $.$ & $.$ & $.$ & $\mathbb{Z}[H]^{2}$ & $\mathbb{Z}[H]$ & $.$ & $.$ \\
$-2$ & $.$ & $.$ & $\mathbb{Z}[H]$ & $.$ & $.$ & $.$ & $.$ \\
$-4$ & $.$ & $\mathbb{Z}[H]$ & $.$ & $.$ & $.$ & $.$ & $.$ \\
$-6$ & $\mathbb{Z}[H]$ & $.$ & $.$ & $.$ & $.$ & $.$ & $.$ \\
\hline
$ $ & $-4$ & $-3$ & $-2$ & $-1$ & $0$ & $1$ & $2$ \\
\end{tabular}
\hspace{2em}
\begin{tabular}{r|llllllllll}
$6$ & $.$ & $.$ & $.$ & $.$ & $.$ & $.$ & $\mathbb{Z}[H]/(H)$ \\
$4$ & $.$ & $.$ & $.$ & $.$ & $.$ & $.$ & $.$ \\
$2$ & $.$ & $.$ & $.$ & $.$ & $\mathbb{Z}[H]/(H)$ & $.$ & $.$ \\
$0$ & $.$ & $.$ & $.$ & $\mathbb{Z}[H]/(H)$ & $\mathbb{Z}[H]$ & $.$ & $.$ \\
$-2$ & $.$ & $.$ & $.$ & $.$ & $.$ & $.$ & $.$ \\
$-4$ & $.$ & $\mathbb{Z}[H]/(H)$ & $.$ & $.$ & $.$ & $.$ & $.$ \\
$-6$ & $.$ & $.$ & $.$ & $.$ & $.$ & $.$ & $.$ \\
\hline
$ $ & $-4$ & $-3$ & $-2$ & $-1$ & $0$ & $1$ & $2$ \\
\end{tabular}
\end{adjustbox}
\vspace{1em}
\caption{$\widetilde{\mathit{CBN}}(9_{42}; \Z)$ and $\widetilde{\mathit{BN}}(9_{42}; \Z).$}
\label{table:BN9_42}
\end{table}

\begin{lem}\label{ss}
    $\widetilde{ss}_h(9_{42}; R) = 0$ for any PID $R$ and prime $h \in R$.
\end{lem}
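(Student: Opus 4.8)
The plan is to compute the invariant $\widetilde{ss}_h(9_{42}; R)$ directly from the reduced Bar-Natan homology presented in \cref{table:BN9_42}, exploiting the fact that $9_{42}$ has small enough crossing number that the relevant chain complex is explicitly known. First I would recall the structural definition of $\widetilde{ss}_h$ from \cite{SS22}: it is extracted from the $h$-torsion order of a distinguished homology class in the reduced Bar-Natan-type theory $\widetilde{BN}(K; R)$, equivalently read off from where the free part of the homology sits relative to the $h$-power filtration. Concretely, since $R$ is a PID and $h$ is prime, the homology decomposes as a direct sum of free modules $R$ and torsion modules $R/(h^{k})$, and $\widetilde{ss}_h$ is determined by the quantum gradings at which the surviving free generator (the image of the canonical generator over the fraction field) is supported.

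The key observation is that the tables display $\widetilde{CBN}(9_{42}; \Z)$ and $\widetilde{BN}(9_{42}; \Z)$ with coefficients in $\Z[H]$ (so $R = \Z[H]$, $h = H$ in the universal case), and that the free part $\Z[H]$ appears in quantum grading $0$. Therefore I would argue as follows. By the universal coefficient / base-change behaviour of Bar-Natan homology, for an arbitrary PID $R$ with prime $h$ one obtains the homology over $R$ from the $\Z$-computation by tensoring and applying the relevant Künneth spectral sequence; the location of the surviving free summand is insensitive to this change of coefficients because $9_{42}$ has no relevant torsion obstructing the free class. From \cref{table:BN9_42} the single free $R$-summand in $\widetilde{BN}(9_{42})$ sits in $q$-grading $0$, and the definition of $\widetilde{ss}_h$ as the average (or appropriate normalization) of the gradings of the two surviving generators then yields $\widetilde{ss}_h(9_{42}; R) = 0$.

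The main obstacle I anticipate is justifying the coefficient-independence rigorously: a priori $\widetilde{ss}_h$ could depend on $(R, h)$ through torsion phenomena, and one must verify that the free part of the homology stays in grading $0$ for every PID $R$ and prime $h$, not merely for $(\Z[H], H)$. I would handle this by invoking \cite[Theorem 2]{SS21}, cited in the paragraph preceding the lemma, which identifies $\widetilde{ss}_h$ with the Rasmussen invariant $s^{\mathbb{F}}$ over any field $\mathbb{F}$ in the special case $(R,h) = (\mathbb{F}[H], H)$. Combined with the knotinfo value $s(9_{42}) = 0$ over $\Q$ (and the fact that $9_{42}$ is $H$-thin enough that the Bar-Natan homology carries no $h$-torsion affecting the canonical class), this pins the free generator to grading $0$ uniformly. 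I would then remark that the explicit $\Z[H]$ computation in \cref{table:BN9_42} confirms there is no exotic torsion that could shift the invariant for any other choice of $(R,h)$, completing the proof that $\widetilde{ss}_h(9_{42}; R) = 0$.
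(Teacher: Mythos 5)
Your computational core agrees with the paper's: the table of $\widetilde{\mathit{BN}}(9_{42};\Z)$ has its unique free summand $\Z[H]$ in bigrading $(0,0)$, and this does give $\widetilde{ss}_H(9_{42})=0$ for the universal pair $(R,h)=(\Z[H],H)$. The genuine gap is the bridge from this one computation to \emph{every} PID $R$ and prime $h$, which is the whole content of the lemma. Your first patch, \cite[Theorem 2]{SS21}, identifies $\widetilde{ss}_h$ with $s^{\mathbb{F}}$ only for pairs of the form $(\mathbb{F}[H],H)$; it says nothing about, say, $(R,h)=(\Z,2)$ or any other PID with a prime, so it cannot deliver the uniform statement. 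Your second patch, the base-change/K\"unneth argument, is asserted rather than carried out, and its supporting claim is false as stated: $\widetilde{\mathit{BN}}(9_{42};\Z)$ visibly \emph{does} contain $H$-torsion (four $\Z[H]/(H)$ summands appear in \cref{table:BN9_42}), so "no relevant torsion" would at minimum need a precise formulation and proof. More fundamentally, even if you establish that base change carries the free summand to a free $R$-summand in bigrading $(0,0)$ for every $(R,h)$, that alone does not compute $\widetilde{ss}_h$: the invariant of \cite{SS22} is defined through the $h$-divisibility of the reduced Lee (canonical) class, not through the abstract bigraded isomorphism type of the homology, and relating the two for arbitrary $(R,h)$ requires exactly the kind of lemma you have not supplied. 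Your description of the invariant as "the average \dots of the gradings of the two surviving generators" is the unreduced Rasmussen recipe, not the definition of the reduced invariant $\widetilde{ss}_h$, which has a single surviving generator; this signals that the extraction step has no precise content.

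The paper closes the gap by a different mechanism, and the difference matters: \cite[Lemma 4.37]{SS22} gives the one-sided comparison $\widetilde{ss}_H \leq \widetilde{ss}_h$ for every admissible pair $(R,h)$, so the universal computation yields only the lower bound $\widetilde{ss}_h(9_{42}) \geq 0$. The upper bound comes from running the same table computation for the mirror, giving $\widetilde{ss}_H(-9_{42})=0$, hence $\widetilde{ss}_h(-9_{42}) \geq 0$, and then using that $\widetilde{ss}_h$ (unlike $\widetilde{ss}_H$) is slice-torus, so that $\widetilde{ss}_h(-9_{42})=-\widetilde{ss}_h(9_{42})$; the two bounds sandwich the value at $0$. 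Your proposal contains no mirror computation and no substitute for this two-sided control, so even after repairing the coefficient issues above, the argument would still not pin down $\widetilde{ss}_h(9_{42})$.
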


\begin{proof}
    From \cite[Lemma 4.37]{SS22}, we always have 
    \[
        \widetilde{ss}_H \leq \widetilde{ss}_h
    \]
    where the left-hand side is the (non slice-torus) invariant corresponding to the pair $(R, h) = (\Z[H], H)$. Thus it suffices to prove that 
    \[
        \widetilde{ss}_H(9_{42}) = \widetilde{ss}_H(-9_{42}) = 0.
    \]
    
    Using the program \verb|yui| \cite{YUI} developed in \cite{SS22}, the reduced Bar-Natan complex $\widetilde{\mathit{CBN}}(9_{42}; \Z)$ of $9_{42}$ over $\Z$ (in its simplified form, together with the differential matrices) can be computed by the command
    \begin{verbatim}$ ykh ckh 9_42 -t Z -c H -r -d\end{verbatim}
    whose result is displayed in the left side of \Cref{table:BN9_42}. With the differential matrices, its homology $\widetilde{\mathit{BN}}(9_{42}; \Z)$ can be easily computed as in the right side of \Cref{table:BN9_42}. Note the single free summand $\Z[H]$ in bigrading $(0, 0)$. This shows that $\widetilde{ss}_H(9_{42}) = 0$. Similarly, we can show that $\widetilde{ss}_H(-9_{42}) = 0$.
\end{proof}

Next, we determine the $\mathfrak{sl}_N$-concordance invariants for $9_{42}$. Recall from \cite{LL:2016} that a family of slice-torus invariants $s_{\partial w, \alpha}$ is given for each choice of a \textit{separable potential}, i.e.\ a degree $N$ monic polynomial $\partial w \in \C[x]$ having $N$ distinct roots in $\C$, together with a choice of a root $\alpha$ of $\partial w$. For any knot $K$, there is a spectral sequence starting from the (reduced) $\mathfrak{sl}_N$ Khovanov--Rozansky homology $H_{\mathfrak{sl}_N}$ and converging to the perturbed homology $H_{\partial w, \alpha}$ of dimension $1$. The invariant is defined as 
\[
    s_{\partial w, \alpha} = \frac{j}{2(N - 1)}
\]
where $j$ is the $\mathfrak{sl}_N$-quantum grading of the surviving generator of the $E_\infty$ term. In particular, the special case $(\partial w, \alpha) = (x^N - 1, 1)$ gives the invariant $s_N$ introduced in \cite{Lobb09, Wu09}. 

\begin{table}[t]
\centering
\begin{subtable}{.4\textwidth}
    \centering
    \begin{tabular}{r|lllll}
     $2$ & $.$ & $1$ & $.$ & $1$ & $.$ \\ 
     $0$ & $1$ & $.$ & $2$ & $.$ & $1$ \\ 
     $-2$ & $.$ & $1$ & $.$ & $1$ & $.$ \\ 
    \hline
     $a \backslash q$ & $-4$ & $-2$ & $0$ & $2$ & $4$ 
    \end{tabular}
    \caption*{$\Delta = -2$}
\end{subtable}
\hspace{2em}
\begin{subtable}{.4\textwidth}
    \centering
    \begin{tabular}{r|lllll}
     $2$ & $.$ & $.$ & $.$ & $.$ & $.$ \\ 
     $0$ & $.$ & $.$ & $1$ & $.$ & $.$ \\ 
     $-2$ & $.$ & $.$ & $.$ & $.$ & $.$ \\ 
    \hline
     $a \backslash q$ & $-4$ & $-2$ & $0$ & $2$ & $4$ 
    \end{tabular}
    \caption*{$\Delta = 0$}
\end{subtable}
\caption{$\mathcal{H}(9_{42})$}
\label{table:HOMFLY9_42}
\end{table}

\begin{lem}\label{sln}
    $s_{\partial w, \alpha}(9_{42}) = 0$ for $N \geq 2$ and any choice of $(\partial w, \alpha)$.
\end{lem}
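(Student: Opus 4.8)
The plan is to read the invariants off directly from the reduced HOMFLY--PT homology $\mathcal{H}(9_{42})$ displayed in \Cref{table:HOMFLY9_42}, using Rasmussen's spectral sequences relating the triply-graded theory to the (perturbed) $\mathfrak{sl}_N$ theories. There are two structural inputs I would invoke. First, for each $N \ge 2$ the reduced $\mathfrak{sl}_N$ homology $H_{\mathfrak{sl}_N}(9_{42})$ is the homology of $\mathcal{H}(9_{42})$ with respect to Rasmussen's differential $d_N$. Second, the perturbed homology $H_{\partial w, \alpha}$ defining $s_{\partial w, \alpha}$ is obtained from $H_{\mathfrak{sl}_N}$ by a further deformation whose $E_\infty$-page is one-dimensional, with $s_{\partial w, \alpha} = j/2(N-1)$ for $j$ the quantum grading of the surviving generator. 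Thus the entire problem reduces to locating, for every $N$ and every $(\partial w, \alpha)$, the quantum grading of the distinguished survivor.

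First I would record the qualitative shape of $\mathcal{H}(9_{42})$ from \Cref{table:HOMFLY9_42}: it is supported in exactly two diagonals, $\Delta = -2$ and $\Delta = 0$; the top diagonal $\Delta = 0$ consists of a single generator at $(a,q) = (0,0)$; and the whole homology is symmetric under $q \mapsto -q$ as well as under $a \mapsto -a$. Since the differentials $d_N$ and the perturbation differential strictly drop the $\Delta$-grading, nothing maps into the lone $\Delta = 0$ generator, and I would argue from the explicit bidegrees in the table that it also supports no nonzero differential in the perturbed theory. Hence it is precisely this generator that survives to $E_\infty$; as it sits in quantum grading $0$, we get $j = 0$ and therefore $s_{\partial w, \alpha}(9_{42}) = 0$ for all $N \ge 2$ and all $(\partial w, \alpha)$.

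The step I expect to be the main obstacle is making the claim ``the survivor sits at $q = 0$'' rigorous uniformly in $(N, \partial w, \alpha)$: the perturbation differential depends on the chosen potential and root, and one must rule out that it connects the $\Delta = 0$ generator to the $\Delta = -2$ diagonal in a $q$-asymmetric way. I would control this by combining three facts. The slice-torus inequality $|s_{\partial w, \alpha}(9_{42})| \le g_4(9_{42}) = 1$ confines the value to $\{-1, 0, 1\}$. The sandwich bounds of \cite{Lobb09, Wu09, LL:2016}, namely $\frac{q_{\min}(H_{\mathfrak{sl}_N})}{2(N-1)} \le s_{\partial w, \alpha} \le \frac{q_{\max}(H_{\mathfrak{sl}_N})}{2(N-1)}$, together with the $q \mapsto -q$ symmetry of the support of $H_{\mathfrak{sl}_N}(9_{42})$ (inherited from the symmetry of $\mathcal{H}(9_{42})$, since the table shows $\mathcal{H}(9_{42})$ and $\mathcal{H}(\overline{9_{42}})$ have the same $(a,q)$-support, while $s_{\partial w, \alpha}(\overline{9_{42}}) = -s_{\partial w, \alpha}(9_{42})$), force the allowed interval to be symmetric about $0$. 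Finally, the uniqueness of the top-diagonal generator of its $(\Delta, q)$-type pins the value to $0$. The remaining delicate point, which I would verify case-by-case from the bidegrees in \Cref{table:HOMFLY9_42}, is that no higher differential in the perturbation spectral sequence can displace the survivor off $q = 0$ for any admissible potential and root.
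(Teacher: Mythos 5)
Your overall strategy---locating the surviving generator inside the $\Delta$-sliced HOMFLY--PT homology of \Cref{table:HOMFLY9_42}---is the same as the paper's, but there is a genuine gap at exactly the point you flag as ``the main obstacle,'' and your proposed workaround does not close it. The missing ingredient is \cite[Theorem 3.14]{Chandler-Gorsky:2024}: after using \cite[Proposition 3.3]{LL:2016} to translate the root to $\alpha = 0$ and writing $\partial w = x^N + a_N x^{N-1} + \cdots + a_2 x$ with $a_2 \neq 0$, the first differential of the spectral sequence $H_{\mathfrak{sl}_N} \Rightarrow H_{\partial w, \alpha}$ equals $a_N d_{N-1} + \cdots + a_2 d_1$, a sum of Rasmussen's differentials whose bidegrees are known ($d_k$ shifts $\Delta$ by $2k-2$ and $(q,a)$ by $(2k,-2)$; the higher differentials $d_1^{(i)}$ shift $\Delta$ by $2-2i$ and $(q,a)$ by $(2i,-2i)$). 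Only with this formula can the ``case-by-case bidegree check'' you defer actually be carried out uniformly in $(\partial w,\alpha)$; it is also what shows $d_{N-1} = \cdots = d_2 = 0$ on $\mathcal{H}(9_{42})$ and that the whole perturbed spectral sequence collapses with survivor at $q=0$. Note, moreover, that your blanket structural claim that the differentials ``strictly drop the $\Delta$-grading'' is false: $d_1$ shifts $\Delta$ by $2\cdot 1 - 2 = 0$, i.e.\ preserves it, and the $d_k$ with $k \geq 2$ shift it by $2k-2$ in the \emph{opposite} direction from the higher $d_1^{(i)}$'s; since every admissible perturbed differential contains the term $a_2 d_1$ with $a_2 \neq 0$, no monotonicity-of-$\Delta$ argument can show that nothing maps into the $\Delta = 0$ generator. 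That conclusion is still true, but it requires the explicit $(q,a)$-support of the $\Delta = -2$ slice. Your other structural input ($H_{\mathfrak{sl}_N}$ is the $d_N$-homology of $\mathcal{H}$) is likewise only the $E_2$ page of Rasmussen's spectral sequence; for $9_{42}$ the isomorphism $\mathcal{H} \cong H_{\mathfrak{sl}_N}$ follows from $\Delta$-thinness via \cite[Corollary 2.14]{Chandler-Gorsky:2024}, not from a general degeneration fact.

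The backup argument you propose in place of the deferred check fails quantitatively. The slice-torus bound gives $|s_{\partial w,\alpha}(9_{42})| \leq 1$, not $s_{\partial w,\alpha}(9_{42}) \in \{-1,0,1\}$: these invariants take values $j/2(N-1)$ with $j$ an even integer, so the sandwich bound with $q_{\max} = -q_{\min} = 4$, together with the mirror symmetry of the support, only confines the value to $\left\{0, \pm\tfrac{1}{N-1}, \pm\tfrac{2}{N-1}\right\}$, and for $N \geq 3$ all of these lie in the symmetric interval $[-1,1]$. Nothing in that argument excludes a survivor at $q = \pm 2$, i.e.\ $s_{\partial w,\alpha} = \pm\tfrac{1}{N-1}$; the assertion that ``uniqueness of the top-diagonal generator pins the value to $0$'' is precisely the statement needing proof, and the only available proof is the grading analysis above. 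Finally, the case $N = 2$ deserves separate treatment, as in the paper, where it is handled by the identity $s_{\partial w, \alpha} = s^{\Q}$ for $N=2$ together with the Bar-Natan homology computation of \cref{ss}; the HOMFLY-to-$\mathfrak{sl}_N$ machinery is invoked in the paper only for $N \geq 3$.
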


\begin{proof}
    When $N = 2$, it is known that $s_{\partial w, \alpha} = s^\Q$ and the statement is proved above. The case for $N \geq 3$ follows from \cite[Theorem 3.14]{Chandler-Gorsky:2024} together with the computational result of the reduced HOMFLY-PT homology of $9_{42}$. First, from \cite[Proposition 3.3]{LL:2016}, $s_{\partial w, \alpha}$ is invariant under any translation of $x$, so we may assume that $\alpha = 0$. Writing $\partial w$ as 
    \[
        \partial w = x^N + a_N x^{N - 1} + \cdots + a_2 x
    \]
    with $a_2 \neq 0$, \cite[Theorem 3.14]{Chandler-Gorsky:2024} states that the first differential of the spectral sequence $H_{\mathfrak{sl}_N} \Rightarrow H_{\partial w, \alpha}$ is given by 
    \[
        d_{\partial w} = a_N d_{N - 1} + \cdots + a_2 d_1
    \]
    where each $d_k$ is the first differential of the spectral sequence given by Rasmussen in \cite{Rasmussen:2015}, starting from the (reduced) HOMFLY-PT homology $\mathcal{H}$ in the $E_1$ page and converging to the $\mathfrak{sl}_k$ homology $H_{\mathfrak{sl}_k}$. We claim that for $K = 9_{42}$, we have $\mathcal{H} \cong H_{\mathfrak{sl}_N}$ and $d_{N - 1} = \cdots = d_2 = 0$. 

    Using the program \verb|yui-kr| \cite{YUIKR} developed in \cite{Nakagane-Sano:2024}, the reduced HOMFLY-PT homology of $9_{42}$ can be computed by the command
    \begin{verbatim}$ ykr 9_42 -f delta\end{verbatim}
    whose result is given in \Cref{table:HOMFLY9_42}. Here, the triply graded homology group $\mathcal{H}(9_{42})$ is sliced by the $\Delta$-grading introduced in \cite{Chandler-Gorsky:2024}, and the structure (i.e.\ $\Q$-dimension) of each $\Delta$-slice is displayed with respect to the $(q, a)$-bigrading. Since $\mathcal{H}(9_{42})$ has $\Delta$-thickness $2$, it follows that the spectral sequence  $\mathcal{H}(9_{42}) \Rightarrow H_{\mathfrak{sl}_N}(9_{42})$ has trivial differentials and hence $\mathcal{H}(9_{42}) \cong H_{\mathfrak{sl}_N}(9_{42})$ (see \cite[Corollary 2.14]{Chandler-Gorsky:2024}). Moreover, for each $2 \leq k \leq N - 1$, the first differential $d_k$ changes the $\Delta$-grading by $2k - 2$ and the $(q, a)$ bigrading by $(2k, -2)$, so we see that $d_k = 0$. 

    Next, let us consider the spectral sequence $\mathcal{H} \Rightarrow H_{\mathfrak{sl}_1}$ for $k = 1$. The $i$-th differential $d^{(i)}_1$ changes the $\Delta$-grading by $2 - 2i$ and the $(q, a)$ bigrading by $(2i, -2i)$. Again from grading reasons, we have $d^{(i)}_1 = 0$ when $i > 1$, and also $d_1 = 0$ on the single $\Q$-summand in $\Delta$-grading $0$. This $\Q$-summand is necessarily the surviving one, and other summands must cancel out by the first differential $d_1$. 

    We conclude that the spectral sequence $\mathcal{H}(9_{42}) \cong H_{\mathfrak{sl}_N}(9_{42}) \Rightarrow H_{\partial w, \alpha}(9_{42}) \cong \Q$ with first differential $d_{\partial w} = a_2 d_1$ collapses after the first page and the quantum grading of the surviving generator is $0$.
\end{proof}

Now, we only need to see $q_M(9_{42})=-1$ to prove \cref{main} from \cref{vanishing}, \cref{ss} and \cref{sln}.

{\em Proof of \cref{main}.}
This is an immediate consequence of the fact that the $\Z_2$-branched cover  $\Sigma_2(9_{42})$ is an L-space, which is observed by Greene in \cite{Gr13}.
(However, notice that $9_{42}$ is not quasi-alternating.)
In \cite[Theorem 1.10]{IT24}, it is proved that 
\[
q_M(K)=-\frac{\sigma(K)}{2}
\]
when $\Sigma_2(K)$ is an L-space.
Since $\sigma(9_{42})=2$ we obtain the desired result.
Here our convention of the knot signature $\sigma$ follows that of \cite{IT24}, i.e. $\sigma(T(2,3)) = -2$.
\qed

In our argument above, we used the fact that  $\Sigma_2(9_{42})$ is an L-space over $\mathbb{F}_2$.
Since details of the proof are not given in \cite{Gr13}, we give a brief computation to prove it. 
\\
\begin{prop}\label{oo}(Greene \cite{Gr13}).
The 3-manifold $\Sigma_2(9_{42})$ is an L-space over the coefficient $\mathbb{F}_2$.
\end{prop}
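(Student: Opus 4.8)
The goal is to show $\dim_{\mathbb{F}_2}\wh{HF}(\Sigma_2(9_{42});\mathbb{F}_2) = |H_1(\Sigma_2(9_{42});\Z)| = \det(9_{42}) = 7$, which is the definition of an $\mathbb{F}_2$-L-space. Two standard inputs frame the problem: for any rational homology sphere one has the lower bound $\dim_{\mathbb{F}_2}\wh{HF}(Y;\mathbb{F}_2) \ge |H_1(Y;\Z)|$, and Ozsv\'ath--Szab\'o's branched double cover spectral sequence, with $E_2$-page the reduced Khovanov homology $\widetilde{Kh}(9_{42};\mathbb{F}_2)$, gives the upper bound $\dim_{\mathbb{F}_2}\wh{HF}(\Sigma_2(9_{42});\mathbb{F}_2) \le \dim_{\mathbb{F}_2}\widetilde{Kh}(9_{42};\mathbb{F}_2)$. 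If $9_{42}$ were Khovanov-thin these bounds would coincide and we would be done; however $9_{42}$ is not thin (thin knots satisfy $s=-\sigma$, whereas $s(9_{42})=0\ne -\sigma(9_{42})=-2$), so $\dim_{\mathbb{F}_2}\widetilde{Kh}(9_{42};\mathbb{F}_2)>7$ and the spectral sequence must carry a nontrivial higher differential. This is precisely what makes the computation nonformal, and is consistent with $9_{42}$ failing to be quasi-alternating.

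The plan I would follow is to bypass the differential analysis by producing an explicit surgery description and invoking the L-space surgery theorem. Since $u(9_{42})=1$, the Montesinos trick presents the branched double cover as a half-integral surgery $\Sigma_2(9_{42}) \cong S^3_{\pm 7/2}(\kappa)$ on some knot $\kappa \subset S^3$, the coefficient $\pm 7/2$ being forced by $|H_1|=\det=7$. I would extract $\kappa$ from the explicit unknotting crossing of a diagram of $9_{42}$, compute its Seifert genus, and verify that $\kappa$ is an L-space knot. By the Ozsv\'ath--Szab\'o--Rasmussen L-space surgery theorem, $S^3_{p/q}(\kappa)$ is an L-space whenever $\kappa$ is an L-space knot and $p/q \ge 2g(\kappa)-1$; since $7/2 = 3.5$, it suffices that $g(\kappa)\le 2$, as then $2g(\kappa)-1 \le 3 < 7/2$. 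If the $-7/2$ sign occurs I would apply the same reasoning to the mirror, using that the L-space condition is orientation-insensitive. This yields $\dim_{\mathbb{F}_2}\wh{HF}(\Sigma_2(9_{42});\mathbb{F}_2)=7$.

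The main obstacle is the explicit identification of the surgery dual knot $\kappa$ together with the verification that it is an L-space knot of genus at most $2$; this is where the real work lies, and where non-quasi-alternation manifests, since such a $\kappa$ need not be alternating or a torus knot. Should pinning down $\kappa$ prove awkward, I would fall back on one of two equivalent routes: (i) exhibit $-\Sigma_2(9_{42})$ as the boundary of a negative-definite plumbing tree with at most one bad vertex and apply Ozsv\'ath--Szab\'o's algorithm for plumbed manifolds, reading off the L-space property from the resulting lattice cohomology; or (ii) run the unoriented skein exact triangle for branched double covers at a well-chosen crossing, reduce to links whose double covers are known L-spaces, and show that the relevant connecting homomorphism is nonzero (equivalently, that the determinant additivity defining quasi-alternation fails in a controlled way), thereby cutting the Khovanov rank down from its thick value to $7$. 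Any of these confirms that $\Sigma_2(9_{42})$ is an L-space over $\mathbb{F}_2$.
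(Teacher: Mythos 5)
Your framing is sound (Euler-characteristic lower bound, skein/Khovanov upper bound, and the correct observation that $9_{42}$ is not thin, so the formal quasi-alternating argument cannot close the gap), and your primary route via the Montesinos trick is genuinely different from the paper's. But as a proof it has a real gap: every step that would constitute the actual argument is deferred. You never identify the surgery dual $\kappa$ with $\Sigma_2(9_{42}) \cong S^3_{\pm 7/2}(\kappa)$, never pin down the sign, and never verify that $\kappa$ (or its mirror) is an L-space knot of genus at most $2$ --- and you say yourself that this is ``where the real work lies.'' Note moreover that certifying that an explicit knot is an L-space knot is itself nontrivial: the standard certificate is exhibiting one positive L-space surgery, which is circular in this context unless $\kappa$ is recognized as a known L-space knot such as a torus knot. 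The route is plausible ($\Sigma_2(9_{42})$ is a small Seifert fibered space, and half-integer surgeries on a trefoil produce Seifert fibered spaces with exceptional fibers of multiplicities $2,3,5$), but the proposal contains the plan, not the proof.

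Your fallback (ii) is essentially the paper's actual argument, except that what you say must be shown is backwards. The paper resolves a single crossing of $9_{42}$ into $8_{19}$ and the link $7n1$, a crossing at which determinant additivity \emph{holds}: $\det(8_{19}) + \det(7n1) = 3 + 4 = 7 = \det(9_{42})$. Both resolutions have $\mathbb{F}_2$ L-space branched double covers --- $\Sigma_2(8_{19}) = \Sigma(2,3,4)$ admits a positive scalar curvature metric, and $7n1$ is two-fold quasi-alternating in the sense of \cite{SS18} --- so the surgery exact triangle of \cite{OS05} gives
\[
\dim_{\mathbb{F}_2}\widehat{HF}(\Sigma_2(9_{42})) \leq 3 + 4 = 7 = |H_1(\Sigma_2(9_{42}))|,
\]
and the Euler-characteristic lower bound forces equality. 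No nonvanishing of a connecting homomorphism is needed (its vanishing is a consequence, not an input), and there is no ``controlled failure of determinant additivity'': if the determinants failed to add at the chosen crossing, the rank inequality from the triangle alone could never reach the bound $7$, and you would instead be stuck proving that connecting maps have a specific nonzero rank, which the triangle cannot give you. The failure of quasi-alternation enters elsewhere: the resolutions are not themselves quasi-alternating ($8_{19}$ is not), but the argument never needs that --- it only needs their branched covers to be L-spaces over $\mathbb{F}_2$.
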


\begin{figure}[htbp]
\begin{center}
\includegraphics[width=50mm]{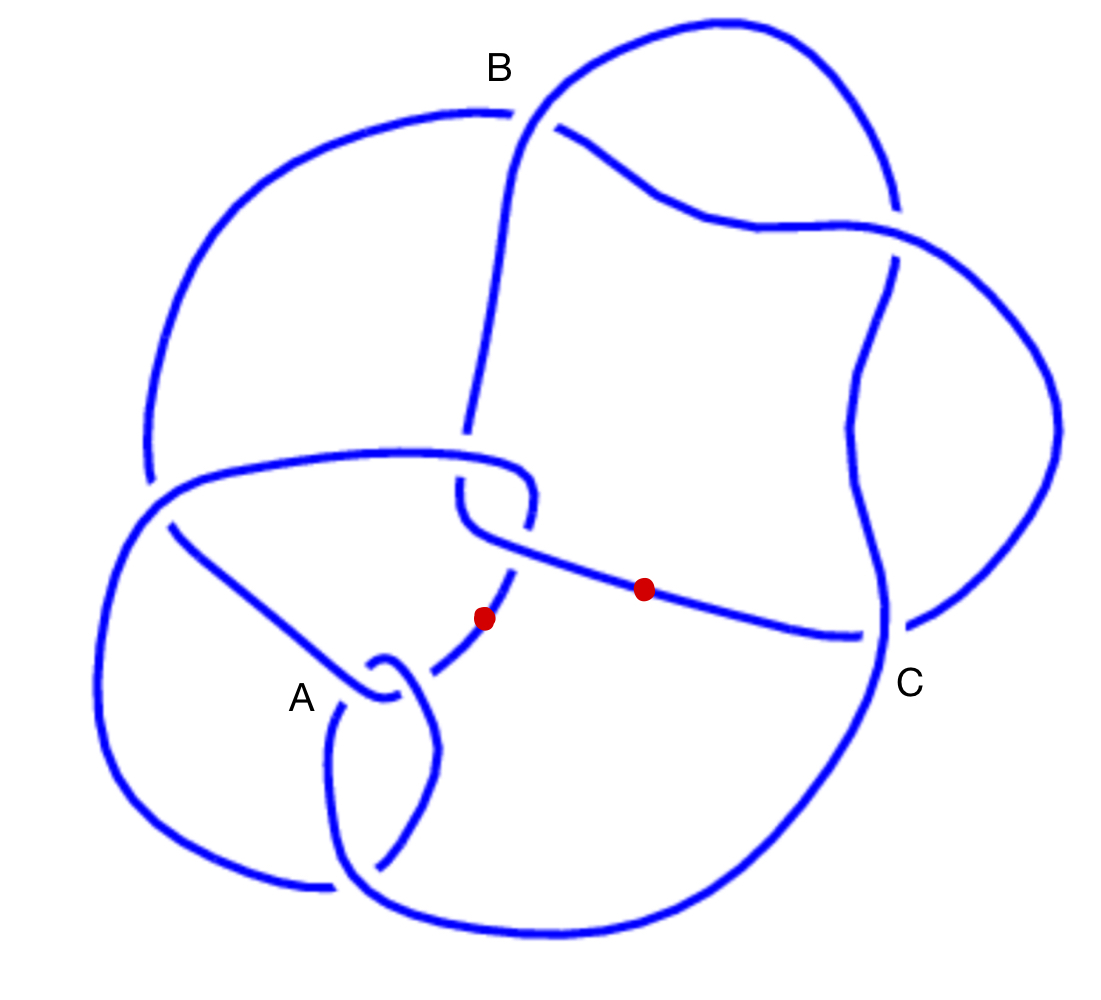}
\caption{A diagram of $9_{42}$ from the KnotInfo \cite{knotinfo}. 
We added names $A, B, C$ for three crossings and markings.}\label{942}
\end{center}
\end{figure}
\begin{proof}[Proof of \cref{oo}]
KnotFolio \cite{knotfolio} is helpful for the authors to check the following argument.
As the two smoothings at the crossing $A$ in the figure \ref{942}, we obtain the knot $8_{19}$ and the link $7n1$. Note that the determinants of $8_{19}$ and $7n1$ are $3$ and $4$ respectively. 
The not $8_{19}$ is nothing but the torus knot $T_{3, 4}$, and since its branched double cover $\Sigma_2(T_{3, 4})=\Sigma(2, 3, 4)$ has a metric with positive scalar curvature (See \cite{milnor19753} for example), it is an L-space.
We can check that $7n1$ is two-fold quasi-alternating (TQA)\cite{SS18} \cite[Section 3]{IsTu24} by smoothing at $B$ and $C$, with the marking described by dots in the diagram.
More precisely we obtain the following:
\[
7n1\xrightarrow{\text{smoothing at }B} \dot{U}_2 \, \text{ and } \, L6n1
\]
\[
L6n1\xrightarrow{\text{smoothing at }C}\dot{U}_2 \,\text{ and } \, L4a1
\]
where $\dot{U}_2$ is the two-component unlink with a dot on each component.
Since $L4a1$ is a non-split alternating link, which is TQA, 
$L6n1$ and thus $7n1$ are also TQA.
Thus the branched double covering  $\Sigma_2(7n1)$ is also an $L$-space over $\mathbb{F}_2$ coefficient by \cite[Corollary 1]{SS18}.
Now by applying 
the long exact sequence for the branched double coverings \cite{OS05} for the smoothing
\[
9_{42}\xrightarrow{\text{smoothing at }A}8_{19}\,\text{ and } \,7n1,
\]
we obtain 
\[
\dim_{\mathbb{F}_2}\widehat{HF}(\Sigma_2(9_{42}))\leq \dim_{\mathbb{F}_2}\widehat{HF}(\Sigma_2(8_{19}))+\dim_{\mathbb{F}_2}\widehat{HF}(\Sigma_2(7n1))=3+4=7=|H_1(\Sigma_2(9_{42}))|.
\]
On the other hand, for any closed oriented 3-manifold $Y$, 
\[
\dim_{\mathbb{F}_2}\widehat{HF}(Y)\geq \chi(\widehat{HF}(Y))=|H_1(Y; \Z)|
\]
holds and this implies
\[
\dim_{\mathbb{F}_2}\widehat{HF}(\Sigma_2(9_{42}))\geq |H_1(\Sigma_2(9_{42}))|.
\]
Thus, $\dim_{\mathbb{F}_2}\widehat{HF}(\Sigma_2(9_{42}))=|H_1(\Sigma_2(9_{42}))|$ and therefore
$\Sigma_2(9_{42})$ is an $L$-space over $\mathbb{F}_2$  as well.

\end{proof}

\begin{rem}
    As we discussed in the proof of \cref{vanishing}, the slice torus invariants $\tau$, $\tau^\#$ and $\tilde{s}$ are related to Heegaard Floer correction term $d$ and instanton Fr\o yshov invariant $h$ of surgeries. This is one reason that the equalities 
    \[
    \tau =\tilde{s} =  \tau^\#
    \]
    are conjectured in \cite{DISST22}. Our arugment shows the implication 
    \[
 q_M(K) > 0 \Rightarrow d(S^3_{1} (K)) <0 
    \]
    does not hold. This is the first slice torus invariant from gauge theory which does not satisfy this kind of implication. 
\end{rem}

\section{$q_M$ and $\theta$ for prime knots with small crossing number }\label{qmtehta}
Let us review constructions of invariants $q_M$ and $\theta$ and summarize their basic properties.

Let $p$ be a prime number.
For a knot $K $ in $S^3$, the associated {\it Seiberg--Witten Floer homotopy type} introduced by Manolescu \cite{Man03} to its 
 $\Z_p$-branched cover $\Sigma_p(K)$ with unique $\Z_p$-invariant spin structure
\[
SWF (\Sigma_p(K)) 
\]
has a $S^1 \times \Z_p$-action \footnote{Strictly speaking, $S^1\times \Z_p$-equivariant Seiberg--Witten Floer stable homotopy type which is independent of the Riemannian metric is not formulated, while the equivariant Seiberg--Witten Floer cohomology which is independent of the Riemannian metric is formulated. We use the latter only. }. 
Baraglia--Hekmati \cite{BH,BH2} and Baraglia \cite{Bar} studied its $S^1\times \Z_p$-equivariant cohomology
\[
H^*_{S^1\times \Z_p} (SWF (\Sigma_p(K)); \mathbb{F}_p )
\]
equipped with the module structure over $H^*(B(S^1\times \Z_p); \mathbb{F}_p)$ and introduced a family of concordance invariants \[
\theta^{(p)}(K) \in \frac{1}{p-1} \Z_{\geq 0} 
\]
from the module structure of $H^*_{S^1\times \Z_p} (SWF (\Sigma_p(K)) )$. 
They reproved and the Milnor conjecture using the 4-genus bound comes from the invariants $\theta^{(p)}(K)$. 
For the computations of $\theta^{(p)}(K)$, they have used: 
\begin{itemize}
\item 
the spectral sequence 
\[
E_2=H^*(B\Z_p; \tilde{H}^*_{S^1}(SWF(\Sigma_p(K)); \mathbb{F}_p) \Rightarrow H^*_{S^1\times \Z_p}(SWF(\Sigma_p(K)); \mathbb{F}_p), 
\]
\item
the isomorphism between the $S^1$-equivariant monopole Floer homologies and the Heegaard Floer homology $HF^+$ , and
\item 
the graded root technique to compute the Heegaard Floer cohomology of almost rational 3-manifolds.
\end{itemize}
 The first and fourth authors studied $p=2$ case and found that $\Z_2$-equivariant cohomology (forgetting the $S^1$ action!) gave a $\Z$-valued slice-torus invariant  $q_M$  and in particular gave an alternative proof of the Milnor conjecture.  The authors did not use  Baraglia--Hekmati's tools mentioned above, but instead used the following methods to show that $q_M$ is actually a slice-torus invariant.
\begin{itemize}
\item 
Some methods used by Kronhimer \cite{Kr97} and Daemi--Scaduto \cite{DS19}\cite{DS23} for singular instanton theory based on Freedman--Quinn's work \cite{FQ90} on normally immersed surface in four-manifolds. This was used to show \[
\operatorname{rank}_{\mathbb{F}_2[Q]} \wt{H}^*_{\Z_2}(SWF (\Sigma_2(K)); \mathbb{F}_2)=1
\]
and the "cobordism inequality" \[
q_M(K_1)\leq q_M(K_0)+g(S)
\]
for a smooth and orientable surface cobordism $S: K_0 \to K_1$ in $[0, 1]\times S^3$, which immediately implies the concordance invariance and the 4-ball genus bound for $q_M$.
\item 
The homotopical transverse knot invariant
\[
\Psi_2(K) : S^0 \to \Sigma^{\text{\textbullet}} SWF(-\Sigma_2(K))  
\]
introduced in \cite{IT24}, its gluing property and non-vanishing property, which is nothing but the equivariant version of the
stable homotopy version of the monopole contact invariant \cite{IT20} and its corresponding properties previously developed by the authors mainly in \cite{IT20}.
This is used to show the "adjunction equality" \[
q_M(\partial S)=g(S)
\]
for symplectic surface $S$ 
in $D^4$ with transverse knot boundary, which in particular implies the  computation for torus knots. 
A key point in the proof of the "adjunction equality" is to prove that the relative invariant $BF^*_{\Sigma_2(S)}(1)$ for such a symplectic surface  $S$ is non-$Q$-torsion and non-divisible by $Q$, and thus attains the "bottom of the $Q$ tower".
\end{itemize}
\par
Based on these methods, in \cite[Theorem 1.16]{IT24}, the authors proved 
\[
\operatorname{rank}_{\mathbb{F}_2[Q]}\wt{H}^*_{\Z_2}(SWF (\Sigma_2(K)); \mathbb{F}_2) =1, 
\]
where $Q$ is the degree one variable so that $\mathbb{F}_2[Q] = H^*(B\Z_2; \mathbb{F}_2)$.  
In addition, the module $\wt{H}^*_{\Z_2}(SWF (\Sigma_2(K)); \mathbb{F}_2)$ has an absolute $\Q$-grading.  

\begin{defn}
From this module, we define a concordance invariant
\[
q_M(K) := \min \{ i|    x \in \wt{H}^i_{\Z_2} (SWF(-\Sigma_2(K)); \mathbb{F}_2),  \, Q^n x \neq 0  \text{ for all }n\geq 0\} - \frac{3}{4} \sigma(K) \in \Z
\]
for a given knot $K$ in $S^3$. 
\end{defn}

\begin{rem}
Conjectually, the invariant $q_M$ is equal to Hendricks--Lipshitz--Sarkar's $q_\tau$ invariant \cite{HLS16} with a signature correction term, defined using $\Z_2$-equivariant Heegaard Floer homology of $\Sigma_2(K)$. A Heegaard Floer counterpart of the transverse knot invariant in \cite{IT24} has been considered in \cite{Ka18} by Kang. 
\end{rem}

From now on, we give the values of $q_M$ and $\theta$ for all 85 prime knots with crossing number $\leq 9$ and explain the current situation for those with crossing number $10$.
First of all, we summarize the basic properties of $q_M$ and $\theta=\theta^{(p=2)}$, which are proven in \cite{Ba22, IT24}. 
\begin{thm}
The invariants $q_M$ and $\theta$ have the following properties: 
\begin{enumerate}
\item 
For any knot $K \subset S^3$, we have integers
\[
q_M(K) \in \Z \text{ and } \theta(K) \in \Z_{\geq 0} , 
\]
which do not depend on orientations of $K$. Also, these are concordance invariants.  
\item 
For oriented knots $K_0, K_1 \subset S^3$, we have 
\[
q_M(K_0 \# K_1)=q_M(K_0)+q_M(K_1) \text{ and } \theta ( K_0 \# K_1) \leq \theta (K_0) + \theta (K_1). 
\]
\item 
For any knot $K \subset S^3$, we have
\[
q_M(K)\leq \theta(K) \leq g_4(K) \text{ and }- \frac{1}{2}\sigma(K)\leq \theta(K) . 
\]
\item 
For any transverse knot $\mathcal{T}\subset (S^3, \xi_{std})$, we have the Bennequin type inequality
\[
sl(\mathcal{T})\leq 2q_M(\mathcal{T})-1, 
\]
where $\xi_{std}$ is the unique tight contact structure on $S^3$.
Moreover, if $\mathcal{T}$ is a boundary of a connected symplectic surface $S \subset (D^4, \omega_{std})$, the equality
\[
sl(\mathcal{T})= 2q_M(\mathcal{T})-1=2g(S)-1
\]
holds, where $\omega_{std}$ denotes the standard symplectic structure on $D^4$ and $sl$ denotes the self-linking number.
As a consequence, for the quasipositive knot $K$, we have
\[
q_M(K)=\theta(K)=g_4(K).
\]
\item 
For a knot $K$ such that the double brached covering $\Sigma_2(K)$ is an $L$-space over $\mathbb{F}_2$ coefficient, 
\[
q_M(K)=\theta(K)=-\frac{\sigma(K)}{2}
\]
holds.
In particular, this relation holds for all quasialternating knots, since the branched double covering of a quasialternating knot is an $L$-space \cite{OS05}.

\item If $d ( \Sigma_2 (K)) < - \sigma(K)$ and $\sigma(K) \leq 0$, then we have 
\[
1 + \frac{1}{2}\sigma(K) \leq \theta(K) , 
\]
where the $d ( \Sigma_2 (K))$ denotes the $d$-invariant with the unique spin structure on $\Sigma_2 (K)$. 
\end{enumerate}
\end{thm}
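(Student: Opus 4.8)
The plan is to assemble the six items from the two primary sources, namely \cite{IT24} for every assertion involving $q_M$ and \cite{Ba22} for those involving $\theta$, organizing the argument around the single structural fact that $\widetilde{H}^*_{\Z_2}(SWF(\Sigma_2(K)); \mathbb{F}_2)$ has rank one over $\mathbb{F}_2[Q]$, established in \cite[Theorem 1.16]{IT24}. This rank-one statement is precisely what makes ``the bottom of the $Q$-tower'' a well-defined single degree, and hence makes $q_M$ a single integer once the signature correction $-\frac{3}{4}\sigma$ is added; I would take it as the foundation and deduce (1) from it, with orientation-independence coming from the fact that $\Sigma_2(K)$ and $\sigma(K)$ do not depend on the orientation of $K$, and concordance invariance coming from the cobordism inequality recalled above (together with Baraglia's corresponding argument for $\theta$).

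For (2), I would invoke the connected-sum behaviour $\Sigma_2(K_0 \# K_1) \cong \Sigma_2(K_0) \# \Sigma_2(K_1)$ together with the smash/gluing formula for the equivariant Floer homotopy type, which yields additivity of the bottom-of-tower degree; combined with additivity of $\sigma$ this gives additivity of $q_M$, while the subadditivity of $\theta$ is the corresponding module-level estimate in \cite{Ba22}. For (3), the inequality $q_M \leq \theta$ is \cite[Theorem 1.11]{IT24}, and $\theta \leq g_4$ together with $-\frac{1}{2}\sigma \leq \theta$ are from \cite[Theorem 1.4]{Ba22}.

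Items (4) and (5) are the analytic core. For (4) I would use the homotopical transverse invariant $\Psi_2(K)$ of \cite{IT24}: its non-vanishing yields the Bennequin-type inequality, and for a symplectic surface $S \subset (D^4, \omega_{std})$ the adjunction equality follows by showing that the relative invariant $BF^*_{\Sigma_2(S)}(1)$ is non-$Q$-torsion and non-divisible by $Q$, i.e.\ attains the bottom of the tower; the quasipositive consequence then drops out because a quasipositive knot bounds a symplectic surface realizing $g_4$. For (5), when $\Sigma_2(K)$ is an $\mathbb{F}_2$-L-space the relevant equivariant Floer homology is pinned down by its $d$-invariant, and a direct reading of the bottom of the $Q$-tower produces the value $-\sigma/2$; this is \cite[Theorem 1.10]{IT24}. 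Item (6) I would extract from Baraglia's spectral sequence relating $H^*_{S^1 \times \Z_2}(SWF(\Sigma_2(K)))$ to $HF^+(\Sigma_2(K))$, reading off the stated bound from the location of $d(\Sigma_2(K))$ under the hypothesis $d(\Sigma_2(K)) < -\sigma(K)$.

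The hard part will not be any single new computation but rather the bookkeeping of grading normalizations: one must check that the $-\frac{3}{4}\sigma$ shift in the definition of $q_M$ is mutually consistent with the $-\frac{1}{2}\sigma$ appearing in (5) and with the transverse and adjunction equalities in (4), and that the conventions of \cite{IT24} and \cite{Ba22} (in particular the sign convention $\sigma(T(2,3)) = -2$) are aligned throughout. Tracking these constants carefully across the two theories is where essentially all of the care is required.
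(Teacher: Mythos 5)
Your proposal matches the paper's own treatment: the paper gives no independent proof of this theorem, stating only that the properties ``are proven in \cite{Ba22, IT24},'' and its surrounding discussion in Section 3 names exactly the ingredients you assemble --- the rank-one theorem \cite[Theorem 1.16]{IT24}, the cobordism inequality, the transverse invariant $\Psi_2$ with the non-$Q$-torsion/non-divisibility of $BF^*_{\Sigma_2(S)}(1)$ for item (4), \cite[Theorem 1.11]{IT24} and \cite[Theorem 1.4]{Ba22} for item (3), and \cite[Theorem 1.10]{IT24} for item (5). The only slip is an attribution in item (5): \cite[Theorem 1.10]{IT24} yields $q_M(K)=-\sigma(K)/2$ for $\mathbb{F}_2$-L-spaces, whereas the companion equality $\theta(K)=-\sigma(K)/2$ requires Baraglia--Hekmati's separate computation of the $S^1\times\Z_2$-equivariant cohomology for L-spaces \cite{Ba22, BH}, not that theorem.
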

The tables in \cref{Table} give the values of $q_M$ and $\theta$ for all 85 prime knots with crossing number $\leq 9$.
Notice that the values of $\theta$ for the knots with the opposite chirality are not calculated.
We compute these for a choice of chirality of $K$ such that $\sigma(K)\leq 0$.
The values of $-\sigma/2$ and $g_4$ are quoted from the knotinfo.
The values of $q_M$ and $\theta$ can be seen as follows.
Among the  85 prime knots with crossing number $\leq 9$, which are listed above,  all but \[
8_{19}, 9_{42}, 9_{46}
\]
are quasialternating and thus satisfy $q_M=\theta=-\sigma/2$.
Since $9_{46}$ is slice,  $q_M=\theta=-\sigma/2=0$.

For $8_{19}$,   $q_M=\theta=-\sigma/2$ still holds since $\Sigma_2(8_{19})=\Sigma(2, 3, 4)$ has a positive scalar curvature metric and thus an $L$-space as explained in the previous section.
We could also use the fact that $8_{19}=T_{3, 4}$ is quasipositive and thus $q_M=\theta=g_4=\frac{\overline{sl}+1}{2}$.
As stated by Greeene and seen in this paper, $\Sigma_2(9_{42})$ is also an L-space  over $\mathbb{F}_2$ and thus $q_M=\theta=-\sigma/2$ holds for $9_{42}$ as well.
\begin{rem}\label{10}
In this remark, we explain the circumstance for the 165 prime knots with corssing number 10, which can be read from the knot info.
Among them, all but 
\[
10_{124}, 10_{128}, 10_{132}, 10_{136}, 10_{139}, 10_{140}, 10_{145}, 10_{152}, 10_{153}, 10_{154}, 10_{161}
\]
are quasialternating and thus satsify $q_M=\theta=-\sigma/2$.
Among the remaining 11 knots, 
\[
10_{124}, 10_{128}, 10_{140}, 10_{145}, 10_{152}, 10_{154}, 10_{161}
\]
are quasipositive and thus satisfy  $q_M=\theta=g_4=\frac{\overline{sl}+1}{2}$.
The remaining 4-knots are 
\[
10_{132}, 10_{136}, 10_{139}, 10_{153}.
\]

Since $10_{153}$ is slice, we have $q_M(10_{136})=\theta(10_{136})=0$.
As for $10_{139}$, by \cite[Lemma 2.16]{FLL22}, $10_{139}$ is a squeezed knot, so $q_M=s/2=\tau=4$ holds for $10_{139}$.
Since $g_4(10_{139})=4$ is also known, we know $\theta(10_{139})=4$.
For $10_{132}$, we have $-\sigma/2=0$ and $g_4=1$, so we know $0\leq \theta(10_{132})\leq 1$ and $ q_M(10_{132})\leq \theta(10_{132})\leq 1$ but the authors could not determine these.
For $10_{136}$, $-\sigma/2=g_4=1$ holds, thus $\theta(10_{136})=1$ but $q_M(10_{136})$ is unknown.
\end{rem}

\begin{rem}
In all prime knots in which we know the values of invariants $q_M$ and $\theta$ in the tables, 
\[
|\sigma(K)|/2\leq |q_M(K)| \text{ and } q_M(K)=\theta(K)\]
hold. 
However, neither of these do not hold in general for a knot $K \subset S^3$. Set $K_0=-T(3,11) \# \#_{10} T(2,3)$, where $-K$ denotes the concordance inverse of $ K$.
We have
\[
q_M(T(3, 11))=g_4( T(3,11) ) = 10, \ 
\sigma ( T(3,11) )  = -16, \  
q_M(T(2, 3))=g_4 (T(2,3) ) =1,  \  
\sigma ( T(2,3) )  = -2 . 
\]
Now by the additivity of $q_M$ and the signature under the connected sum operation, we have 
$q_M ( K_0) =0 $
$\sigma (K_0) = -4 $. 
Thus, $2=|\sigma(K_0)|/2> |q_M(K_0)|=0$. 
Moreover,  we have $\sigma(K)/2\leq \theta(K)$, so 
\[
0=q_M(K_0)< -\sigma(K_0)/2=2\leq\theta(K_0).
\]
Furthermore, by considering $\#_n K_0$, we can see that the difference $\theta(K)-q_M(K)$ can be arbitrarily large.
\end{rem}
\begin{ques}

Find  $q_M(10_{132})$, $\theta(10_{132})$, and $q_M(10_{136})$.
As detailed in \eqref{10}, these are the only undetermined values among the prime knots with the crossing number $\leq 10$.
Since $10_{132}$ and $10_{136}$ are Montesinos knots, the authors expect that these can be computed by using the Seifert fibered structures on the double-branched coverings.
\end{ques}

 \clearpage
\section{Tables}\label{Table}
\begin{center}
\begin{tabular}{ c c c c c c c }
 $K$ &$-\sigma(K)/2$ & $q_M(K)$ & $\theta(K)$ & $g_4(K)$ &(quasi)alternating & positivity($BP\subset P \subset SQ \subset QP$)  \\ 
  $0_1$ & $0$ & $0$ & $0$ & $0$&  alt& BP \\ 
  $3_1$ & $1$ & $1$ & $1$ & $1$&  alt& BP\\ 
  $4_1$ & $0$ & $0$ & $0$ & $1$&  alt&-  \\
  $5_1$ & $1$ & $1$ & $1$ & $1$&  alt& BP\\
  $5_2$ & $1$ & $1$ & $1$ & $1$&  alt& P\\ 
  $6_1$ & $0$ & $0$ & $0$ & $0$&  alt& -\\
  $6_2$ & $1$ & $1$ & $1$ & $1$&  alt & -\\
  $6_3$ & $0$ & $0$ & $0$ & $1$&  alt &- \\ 
  $7_1$ & $3$ & $3$ & $3$ & $3$&  alt& BP\\
  $7_2$ & $1$ & $1$ & $1$ & $1$&  alt& P\\
  $7_3$ & $2$ & $2$ & $2$ & $2$& alt& P\\
  $7_4$ & $1$ & $1$ & $1$ & $1$& alt& P\\
  $7_5$ & $2$ & $2$ & $2$ & $2$&  alt& P\\
  $7_6$ & $1$ & $1$ & $1$ & $1$& alt&- \\
  $7_7$ & $0$ & $0$ & $0$ & $1$& alt&-\\
  $8_1$ & $0$ & $0$ & $0$ & $1$& alt &-\\
  $8_2$ & $2$ & $2$ & $2$ & $2$& alt &-\\
  $8_3$ & $0$ & $0$ & $0$ & $0$& alt &-\\
  $8_4$ & $1$ & $1$ & $1$ & $1$& alt &-\\
  $8_5$ & $2$ & $2$ & $2$ & $2$& alt &-\\
  $8_6$ & $1$ & $1$ & $1$ & $1$& alt &-\\
  $8_7$ & $1$ & $1$ & $1$ & $1$& alt &-\\
  $8_8$ & $0$ & $0$ & $0$ & $0$& alt &-\\
  $8_9$ & $1$ & $1$ & $1$ & $1$& alt &-\\ 
  $8_{10}$ & $1$ & $1$ & $1$ & $1$& alt &-\\
  $8_{11}$ & $1$ & $1$ & $1$ & $1$& alt &-\\
  $8_{12}$ & $0$ & $0$ & $0$ & $1$& alt &-\\
  $8_{13}$ & $0$ & $0$ & $0$ & $1$& alt &- \\
  $8_{14}$ & $1$ & $1$ & $1$ & $1$& alt &-\\
  $8_{15}$ & $2$ & $2$ & $2$ & $2$& alt&P \\
  $8_{16}$ & $1$ & $1$ & $1$ & $1$& alt &-\\
  $8_{17}$ & $0$ & $0$ & $0$ & $1$& alt &-\\
  $8_{18}$ & $0$ & $0$ & $0$ & $1$& alt &-\\
  $8_{19}$ & $3$ & $3$ & $3$ & $3$&  non-q.alt!& BP\\
  $8_{20}$ & $0$ & $0$ & $0$ & $0$&  q.alt& QP\\
  $8_{21}$ & $1$ & $1$ & $1$ & $1$&  q.alt &- 
     \end{tabular}
  \newpage
  \begin{tabular}{ c c c c c c c }
  $K$ &$-\sigma(K)/2$ & $q_M(K)$ & $\theta(K)$ & $g_4(K)$ &(quasi)alternating & positivity ($BP\subset P \subset SQ \subset QP$)\\ 
  $9_1$ & $4$ & $4$ & $4$ & $4$& alt& BP\\
  $9_2$ & $1$ & $1$ & $1$ & $1$&  alt& P\\
  $9_3$ & $3$ & $3$ & $3$ & $3$&  alt& P\\
  $9_4$ & $2$ & $2$ & $2$ & $2$& alt& P\\
  $9_5$ & $1$ & $1$ & $1$ & $1$& alt& P\\
  $9_6$ & $3$ & $3$ & $3$ & $3$& alt& P\\
  $9_7$ & $2$ & $2$ & $2$ & $2$& alt& P\\
  $9_8$ & $1$ & $1$ & $1$ & $1$& alt&-\\
  $9_9$ & $3$ & $3$ & $3$ & $3$& alt & P \\
  $9_{10}$ & $2$ & $2$ & $2$ & $2$& alt& P \\
  $9_{11}$ & $2$ & $2$ & $2$ & $2$& alt &-\\
  $9_{12}$ & $1$ & $1$ & $1$ & $1$& alt &P\\
  $9_{13}$ & $2$ & $2$ & $2$ & $2$& alt &-\\
  $9_{14}$ & $0$ & $0$ & $0$ & $1$& alt &-\\
  $9_{15}$ & $1$ & $1$ & $1$ & $1$& alt &-\\
  $9_{16}$ & $3$ & $3$ & $3$ & $3$& alt& P\\
  $9_{17}$ & $1$ & $1$ & $1$ & $1$& alt &-\\
  $9_{18}$ & $2$ & $2$ & $2$ & $2$& alt& P\\
  $9_{19}$ & $0$ & $0$ & $0$ & $1$& alt &-\\
  $9_{20}$ & $2$ & $2$ & $2$ & $2$& alt &-\\
  $9_{21}$ & $1$ & $1$ & $1$ & $1$& alt &-\\
  $9_{22}$ & $1$ & $1$ & $1$ & $1$& alt &-\\
  $9_{23}$ & $2$ & $2$ & $2$ & $2$& alt& P\\
  $9_{24}$ & $0$ & $0$ & $0$ & $1$& alt&-\\
  $9_{25}$ & $1$ & $1$ & $1$ & $1$& alt &-\\
  $9_{26}$ & $1$ & $1$ & $1$ & $1$& alt &-\\
  $9_{27}$ & $0$ & $0$ & $0$ & $0$& alt &-\\
  $9_{28}$ & $1$ & $1$ & $1$ & $1$& alt &-\\
  $9_{29}$ & $1$ & $1$ & $1$ & $1$& alt &-\\
  $9_{30}$ & $0$ & $0$ & $0$ & $1$& alt &-\\
  $9_{31}$ & $1$ & $1$ & $1$ & $1$& alt &-\\
  $9_{32}$ & $1$ & $1$ & $1$ & $1$& alt &-\\
  $9_{33}$ & $0$ & $0$ & $0$ & $1$& alt &-\\
  $9_{34}$ & $0$ & $0$ & $0$ & $1$& alt &-\\
  $9_{35}$ & $1$ & $1$ & $1$ & $1$& alt & P\\
  $9_{36}$ & $2$ & $2$ & $2$ & $2$& alt &-\\
  $9_{37}$ & $0$ & $0$ & $0$ & $1$& alt &-\\
  $9_{38}$ & $2$ & $2$ & $2$ & $2$& alt &-\\
  $9_{39}$ & $1$ & $1$ & $1$ & $1$& alt &-\\
  $9_{40}$ & $1$ & $1$ & $1$ & $1$& alt &-\\
  $9_{41}$ & $0$ & $0$ & $0$ & $0$& alt &-\\
  $9_{42}$ & $1$ & $1$ & $1$ & $1$& non-q.alt! &-\\
  $9_{43}$ & $2$ & $2$ & $2$ & $2$& q.alt &-\\
  $9_{44}$ & $0$ & $0$ & $0$ & $1$&  q.alt &-\\
  $9_{45}$ & $1$ & $1$ & $1$ & $1$&  q.alt &-\\
  $9_{46}$ & $0$ & $0$ & $0$ & $0$& non-q.alt! &-\\
  $9_{47}$ & $1$ & $1$ & $1$ & $1$& q.alt &-\\
  $9_{48}$ & $1$ & $1$ & $1$ & $1$&  q.alt &-\\
  $9_{49}$ & $2$ & $2$ & $2$ & $2$&  q.alt & P  
\end{tabular}
\end{center}

(alt=alternating, \quad q.alt=quasi alternating, \quad BP=braid positive,\quad P=positive,\quad SQ=strongly quasipositive, \quad QP=quasipositive)

\bibliographystyle{plain}
\bibliography{tex}

\end{document}